\documentclass[12pt,reqno]{amsart}

\usepackage[dvips]{graphicx}
\usepackage[arrow,matrix,curve]{xy} 

\usepackage{amssymb, latexsym, amsmath, amscd, array, 
}

\topmargin=0.1in

\newtheorem{theorem}{Theorem}[section]
\newtheorem{lemma}[theorem]{Lemma}
\newtheorem{proposition}[theorem]{Proposition}
\newtheorem{corollary}[theorem]{Corollary}

\theoremstyle{definition}
\newtheorem{definition}[theorem]{Definition}

\newtheorem{remark}[theorem]{Remark}
\newtheorem{question}[theorem]{Question}

\numberwithin{equation}{section}
\numberwithin{figure}{section} 
\numberwithin{table}{section}

\DeclareMathOperator{\PD}{{\rm PD}}

\DeclareMathOperator{\sys}{{\rm sys}}

\DeclareMathOperator{\trace}{{\rm trace}}

\DeclareMathOperator{\area}{{\rm area}}
\DeclareMathOperator{\vol}{{\rm vol}} 

\DeclareMathOperator{\PPP}{{\PP_{{\rm BA}}^{\phantom{I}}}}
\DeclareMathOperator{\av}{{av}} 
\DeclareMathOperator{\LA}{{LA}}
\DeclareMathOperator{\LB}{{LB}} 
\DeclareMathOperator{\TTT}{{T}}

\newcommand{\fla}{{f_{\LA}}} 
\newcommand{\hav}{{h_{\av}}}
\newcommand{\phiav}{{\phi_{\av}}}

\newcommand{\barf}{{\bar f\,}}

\newcommand\ie {{\it i.e.\ }}

\newcommand\cf {\hbox{\it cf.\ }}

\newcommand\gmetric{{\mathcal G}}

\newcommand{\length}{{\rm length}}

\newcommand \cqfd{\unskip\kern 6pt\penalty 500
\raise -2pt\hbox{\vrule\vbox to10pt{\hrule width 4pt
\vfill\hrule}\vrule}\par}                 

\def\adots{\mathinner{\mkern2mu\raise1pt\hbox{.}
\mkern3mu\raise4pt\hbox{.}\mkern1mu\raise7pt\hbox{.}}}
\def\hfl#1{\frac{\buildrel{#1}}{{\hbox to 12mm{\rightarrowfill}}}}
  
\def \\R^n \times \R^n
\rightarrow \R{\mathop{\R^n \times \R^n
\rightarrow \R}}

\newcommand\rk{\operatorname{{rank}}}

\newcommand\C{{\mathbb {C}}}

\newcommand\N {{\mathbb N}}

\newcommand\PP {{\mathbb {P}}}

\newcommand\var {{\rm Var}}

 \newcommand\R {{\mathbb R}}
\newcommand\RR {{\mathbb R}} 
\newcommand\T {{\mathbb T}} \newcommand\Z {{\mathbb Z}}




\long\def\forget#1\forgotten{} %

\long\def\forgett#1\forgottent{} %

\def\circ{\mathchoice%
 {\mathrel{\raise 1pt\hbox{$\scriptstyle\mathchar"020E$}}}
 {\mathrel{\raise 1pt\hbox{$\scriptstyle\mathchar"020E$}}}
 {\mathrel{\raise 1pt\hbox{$\scriptscriptstyle\mathchar"020E$}}}
 {}
}

\newcommand{\nc}{\newcommand} \nc{\on}{\operatorname}

\nc{\df}{\on{\it df}}

\nc{\conf}{\on{conf}}

\nc{\spt}{\on{spt}}
\nc{\norm}[1]{\| #1 \|}
\nc{\parallelleer}{\norm{\ }} 
\nc{\parallelh}{\norm h} 
\nc{\parallelk}{\norm k} 
\nc{\parallelx}{\norm x} 
\nc{\parallelhrr}{\norm {h_\RR}} 
\nc{\parallelom}{\norm \omega} 
\nc{\parallelomij}{\norm {\omega_{i_j}}} 
\nc{\parallelomx}{\norm {\omega_{x}}} 
\nc{\parallelpi}{\norm \pi} 
\nc{\parallelalf}{\norm \alpha} 
\nc{\parallelalfs}{\norm {\alpha_s}} 
\nc{\parallelalfi}{\norm {\alpha_i}} 
\nc{\parallelalfij}{\norm {\alpha_{i_j}}} 
\nc{\parallelbeta}{\norm \beta} 
\nc{\parallelbetat}{\norm {\beta_t}} 
\nc{\parallelhcapalf}{\norm {h \cap \alpha}} 
\nc{\parallelPDralf}{\norm {\PD_\RR(\alpha)}} 
\nc{\strichleer}{| \  |}
\nc{\NN}{\mathbb N}

\nc{\rr}{\mbox{$\scriptstyle\mathbb R$}}

\nc{\dF}{{\it dF}} 

\nc{\DF}{{\it DF}} 

\nc{\ds}{{\it ds}} 

\nc{\dvol}{{\it dvol}}

\nc{\grad}{{\rm grad}} 
\nc{\strichw}{\|\omega\|} 
\nc{\strichwx}{|\omega_x|}
\nc{\Hess}{{\rm Hess}}

\begin{document}

\title[Loewner's torus inequality with isosystolic defect] {Loewner's
torus inequality with isosystolic defect}

\author{Charles Horowitz}

\author{Karin Usadi Katz}

\author[M.~Katz]{Mikhail G. Katz$^{*}$}

\address{Department of Mathematics, Bar Ilan University, Ramat Gan
52900 Israel} \email{\{horowitz,katzmik\}@macs.biu.ac.il}

\thanks{$^{*}$Supported by the Israel Science Foundation (grants
no.~84/03 and 1294/06) and the BSF (grant 2006393)}

\subjclass[2000]{Primary 
53C23;            
Secondary 30F10,  
35J60,            
58J60
}

\keywords{Bonnesen's inequality, Liouville's equation, Loewner's torus
inequality, systole, systolic defect, variance}

\date{\today}

\begin{abstract}
We show that Bonnesen's isoperimetric defect has a systolic analog for
Loewner's torus inequality.  The isosystolic defect is expressed in
terms of the probabilistic variance of the conformal factor of the
metric~$\gmetric$ with respect to the flat metric of unit area in the
conformal class of~$\gmetric$.
\end{abstract}

\maketitle \tableofcontents

\section{Bonnesen defect and isosystolic defect}

\begin{figure}
\includegraphics[height=2.5in]{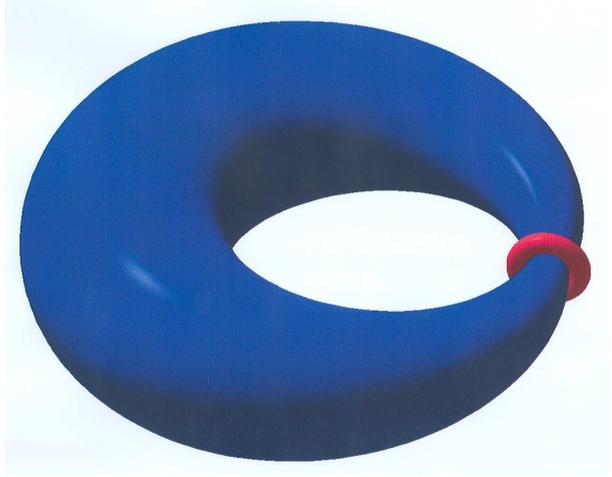}
\caption{Wikiartist's conception of a shortest loop on a torus}
\label{micro}
\end{figure}

The {\em systole\/} of a compact metric space~$X$ is a metric
invariant of~$X$, defined to be the least length of a noncontractible
loop in~$X$.  We will denote it~$\sys=\sys(X)$, \cf
M.~Gromov~\cite{Gr1, Gr2, Gr3, Gr4}.  When~$X$ is a graph, the
invariant is usually referred to as the {\em girth\/}, ever since
W.~Tutte's article~\cite{Tu}.  Possibly inspired by the latter,
C.~Loewner started thinking about systolic questions on surfaces in
the late forties, resulting in a~'50 thesis by his student P.M. Pu,
published as \cite{Pu}.

Loewner himself did not publish his torus inequality~\eqref{11L},
apparently leaving it to Pu to pursue this line of research.
Meanwhile, the latter was recalled to the mainland after the
communists ousted Chiang Kai-shek in '49.  Pu was henceforth confined
to research in fuzzy topology in the service of the people.  Our guess
is that Pu may have otherwise obtained a geometric inequality with
isosystolic defect, already half a century ago, placing it among the
classics of the global geometry of surfaces.

Similarly to the isoperimetric inequality, Loewner's torus inequality
relates the total area, to a suitable~$1$-dimensional invariant,
namely the systole, \ie least length of a noncontractible loop on the
torus~$(\T^2, \gmetric)$:
\begin{equation}
\label{11L} 
\area(\gmetric) - \tfrac{\sqrt{3}}{2} \sys(\gmetric)^2 \geq 0,
\end{equation}
\cf \eqref{51} and \cite{Pu, SGT}.

The classical Bonnesen inequality \cite{Bo} is the strengthened
isoperimetric inequality
\begin{equation}
\label{11b}
L^2 - 4\pi A \geq \pi^2(R-r)^2,
\end{equation}
see \cite[p.~3]{B-Z88}.  Here~$A$ is the area of the region bounded by
a closed Jordan curve of length (perimeter)~$L$ in the plane,~$R$ is
the circumradius of the bounded region, and~$r$ is its inradius.  The
error term~$\pi^2(R-r)^2$ on the right hand side of~\eqref{11b} is
traditionally referred to as the {\em isoperimetric defect}.

In the present text, we will strengthen Loewner's torus inequality by
introducing a ``defect" term \`a la Bonnesen.  There is no defect term
in either \cite{Pu} or \cite{SGT}.  The approach that has been used in
the literature is via an integral identity expressing area in terms of
energies of loops.  Somehow researchers in the field seem to have
overlooked the fact that the computational formula for the variance
yields an improvement, namely the defect term.  There is thus a
significant change of focus, from the integral geometric identity, to
the application of the computational formula, elementary though it may
be.

If we use conformal representation to express the metric~$\gmetric$ on
the torus as
\[
        f^2 (dx^2+dy^2)
\]
with respect to a unit area flat metric~$dx^2+dy^2$ on the torus
viewed as a quotient of the~$(x,y)$ plane by a lattice (see
\eqref{uni}), then the defect term in question is simply the variance
of the conformal factor~$f$ above.  Then the inequality with the
defect term can be written as follows:
\begin{equation}
\label{13}
\area(\gmetric) - \tfrac{\sqrt{3}}{2} \sys(\gmetric)^2 \geq \var(f).
\end{equation}
Here the error term, or {\em isosystolic defect}, is given by the
variance
\begin{equation}
\label{14b}
\var(f)=\int_{\T^2} (f-m)^2
\end{equation}
of the conformal factor~$f$ of the metric~$\gmetric=f^2(dx^2+dy^2)$ on
the torus, relative to the unit area flat
metric~$\gmetric_0=dx^2+dy^2$ in the same conformal class.  Here
\begin{equation}
\label{15b}
m=\int_{\T^2} f
\end{equation}
is the mean of~$f$.  More concretely, if~$(\T^2,\gmetric_0) = \R^2/L$
where~$L$ is a lattice of unit coarea, and~$D$ is a fundamental domain
for the action of~$L$ on~$\R^2$ by translations, then the integral
\eqref{15b} can be written as
\[
m=\int_D f(x,y) dxdy
\]
where~$dxdy$ is the standard measure of~$\R^2$.  Every flat torus is
isometric to a quotient~$\T^2 = \R^2/L$ where~$L$ is a lattice, \cf
\cite[Theorem 38.2]{Loe}.  Recall that the uniformisation theorem in
the genus~$1$ case can be formulated as follows.

\begin{theorem}[Uniformisation theorem]
\label{11u}
For every metric~$\gmetric$ on the~$2$-torus~$\T^2$, there exists a
lattice~$L\subset \R^2$ and a positive~$L$-periodic function~$f(x,y)$
on~$\R^2$ such that the torus~$(\T^2,\gmetric)$ is isometric to
\begin{equation}
\label{uni}
\left( \R^2/L, f^2 ds^2 \right),
\end{equation}
where~$ds^2=dx^2+dy^2$ is the standard flat metric of~$\R^2$.
\end{theorem}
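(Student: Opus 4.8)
The plan is to deduce the statement from the classical uniformisation theorem for compact Riemann surfaces, so that the only genuinely analytic input is the passage from the metric~$\gmetric$ to a compatible conformal structure. First I would invoke the Korn--Lichtenstein theorem on the existence of isothermal coordinates (equivalently, local solvability of the Beltrami equation): every point of~$(\T^2,\gmetric)$ has a neighbourhood with a coordinate~$z=x+iy$ in which~$\gmetric=f^2(dx^2+dy^2)$ for a positive function~$f$. Since transition maps between such charts are orientation-preserving conformal maps, hence biholomorphic, these charts make~$\T^2$ into a compact Riemann surface~$X$ of genus~$1$, and by construction the identity map is an isometry~$(\T^2,\gmetric)\cong X$ that is conformal for the resulting complex structure.

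Second, I would uniformise~$X$. Its universal cover~$\widetilde X$ is a simply connected Riemann surface, hence biholomorphic to~$\widehat{\C}$, to~$\C$, or to the unit disc~$\D$. It is not~$\widehat{\C}$, since~$\pi_1(\T^2)\cong\Z^2$ is infinite; and it is not~$\D$, since a quotient of~$\D$ by a fixed-point-free group of biholomorphisms inherits the Poincar\'e metric of curvature~$-1$, which is impossible on a closed surface of Euler characteristic~$0$ by Gauss--Bonnet. Hence~$\widetilde X\cong\C$. The deck group then acts on~$\C$ freely and properly discontinuously with compact quotient; as~$\Aut(\C)=\{z\mapsto az+b\}$ and only~$a=1$ avoids a fixed point, the deck group is a group of translations, necessarily a lattice~$L$, so~$X$ is biholomorphic to~$\C/L=\R^2/L$.

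Composing the isometry~$(\T^2,\gmetric)\cong X$ with the biholomorphism~$X\cong\R^2/L$ yields a conformal diffeomorphism~$\Phi\colon\R^2/L\to(\T^2,\gmetric)$. Because~$\Phi$ is conformal, the pulled-back metric~$\Phi^{*}\gmetric$ on~$\R^2/L$ has the form~$f^2(dx^2+dy^2)$ with~$f>0$, the locally defined conformal factors patching into one globally defined~$f$ because the flat metric~$dx^2+dy^2$ is globally defined on~$\R^2/L$; lifting to~$\R^2$ gives the claimed isometry~$(\T^2,\gmetric)\cong(\R^2/L,f^2\,ds^2)$ with~$f$ a positive~$L$-periodic function. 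Finally, replacing~$L$ by~$\lambda L$ and~$f(x,y)$ by~$f(x/\lambda,y/\lambda)$ for the appropriate~$\lambda>0$ normalises~$L$ to have unit coarea, so that~$ds^2$ becomes the unit-area flat metric~$\gmetric_0$ used in the sequel.

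The one step that is not formal is the local existence of isothermal coordinates in the first paragraph; everything afterwards is the standard complex-analytic packaging of uniformisation in genus~$1$. For the smooth metrics at hand this existence is classical and I would simply cite it. A self-contained alternative to the middle paragraph is to choose a holomorphic~$1$-form~$\omega$ on~$X$ (the space of such forms being one-dimensional), observe that it is nowhere vanishing because~$\deg K_X=2g-2=0$, and integrate it from a basepoint to build the biholomorphism of~$\widetilde X$ onto~$\C$ whose deck group is the period lattice of~$\omega$.
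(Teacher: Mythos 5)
Your proposal is correct. The paper itself offers no proof of Theorem~\ref{11u}: it is recalled as a classical fact (the uniformisation theorem in genus~$1$), so there is nothing to compare against line by line. Your argument is the standard one --- local isothermal coordinates via Korn--Lichtenstein to endow $(\T^2,\gmetric)$ with a compatible complex structure, then the classification of the universal cover and of fixed-point-free subgroups of $\Aut(\C)$ to identify the surface with $\C/L$ --- and you correctly isolate the one genuinely analytic step (existence of isothermal coordinates) as the input to be cited; the final rescaling to a unit-coarea lattice is also the normalisation the paper uses in the sequel. The only point worth making explicit is that to get \emph{holomorphic} (rather than merely conformal) transition maps you should fix an orientation of $\T^2$ and keep only positively oriented isothermal charts; with that said, the argument is complete.
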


When the flat metric is that of the unit square torus, Loewner's
inequality can be strengthened to the inequality
\[
\area(\gmetric) - \sys(\gmetric)^2 \geq \var(f),
\]
\cf \eqref{52z}.  In this case, if the conformal factor depends only
on one variable (as, for example, in the case of surfaces of
revolution), one can strengthen the inequality further by providing a
second defect term as follows:
\begin{equation}
\area(\gmetric) - \sys(\gmetric)^2 \geq \var(f) + \frac{1}{4} \left|
f_0 \right|_1^2,
\end{equation}
where~$f_0=f-E(f)$, while~$E(f)$ is the expected value of~$f$, and
$\left|\; \right|_1$ is the~$L^1$-norm.  See also
inequality~\eqref{93}.  More generally, we obtain the following
theorem.

We first define a ``biaxial'' projection~$\PPP(f)$ as follows.  Given
a doubly periodic function~$f(x,y)$, \ie a function defined on
$\R^2/\Z^2$, we decompose~$f$ by setting
\[
f(x,y)= E(f) + g_f(x) + h_f(y) + k_f(x,y),
\]
where the single-variable functions~$g_f$ and~$h_f$ have zero means,
while~$k_f$ has zero mean along every vertical and horizontal unit
interval.  We have~$g_f(x)=\int_0^1 f(x,y)dy$, while~$h_f(y)=\int_0^1
f(x,y)dx$.  The projection~$\PPP(f)$ is then defined by setting
\[
\PPP(f)= g_f(x)+h_f(y).
\]
In terms of the double Fourier series of~$f$, the projection~$\PPP$
amounts to extracting the~$(m,n)$-terms such that~$mn=0$ (\ie the
terms located along the pair of coordinate axes),
but~$(m,n)\not=(0,0)$.

\begin{theorem}
\label{11c}
In the conformal class of the unit square torus, the metric~$f^2 ds^2$
defined by a general conformal factor~$f(x,y)>0$, satisfies the
following version of Loewner's torus inequality with a second systolic
defect term:
\begin{equation}
\label{16}
\area(\gmetric) - \sys(\gmetric)^2 \geq \var(f) + \frac{1}{16} \left|
\PPP(f) \right|_1^2.
\end{equation}
\end{theorem}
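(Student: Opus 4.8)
The plan is to establish (16) by a two–step argument: first reduce everything to the square lattice computation underlying the basic inequality $\area(\gmetric)-\sys(\gmetric)^2\ge\var(f)$, and then extract the extra term $\tfrac1{16}|\PPP(f)|_1^2$ from a more careful estimate of the systole. Normalize so that $(\T^2,\gmetric_0)=\R^2/\Z^2$ and $\gmetric=f^2\,ds^2$ with $f>0$ doubly periodic, and set $E(f)=m$. By the computational formula for the variance, $\area(\gmetric)=\int_{\T^2}f^2=m^2+\var(f)$, so (16) is equivalent to the purely one–dimensional–flavored estimate $\sys(\gmetric)^2\le m^2-\tfrac1{16}|\PPP(f)|_1^2$. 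The first step, then, is to record that $\sys(\gmetric)^2\le m^2$ — this is exactly what drives Loewner's inequality in the square case: the two families of coordinate loops $\{x=c\}$ and $\{y=c\}$ have $\gmetric$-lengths $\ell_1(c)=\int_0^1 f(c,y)\,dy$ and $\ell_2(c)=\int_0^1 f(x,c)\,dx$, and averaging gives $\inf_c\ell_1(c)\cdot\inf_c\ell_2(c)\le\big(\int f\big)^2=m^2$, whence $\sys(\gmetric)^2\le\inf_c\ell_1(c)\,\inf_c\ell_2(c)\le m^2$.

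The substance of the theorem is the improvement by $\tfrac1{16}|\PPP(f)|_1^2$. Write $f=m+g_f(x)+h_f(y)+k_f(x,y)$ as in the statement, so $\PPP(f)=g_f+h_f$ and $\int_0^1 g_f=\int_0^1 h_f=0$. The length of the vertical loop at abscissa $c$ is $\ell_1(c)=m+g_f(c)+\int_0^1 h_f(y)\,dy+\int_0^1 k_f(c,y)\,dy=m+g_f(c)$, since both $h_f$ and $k_f$ integrate to zero in $y$. Similarly $\ell_2(c)=m+h_f(c)$. Hence
\begin{equation}
\label{pf1}
\sys(\gmetric)^2\le\Big(\inf_c\big(m+g_f(c)\big)\Big)\Big(\inf_c\big(m+h_f(c)\big)\Big)=\big(m-\|g_f\|_\infty^-\big)\big(m-\|h_f\|_\infty^-\big),
\end{equation}
where $\|u\|_\infty^-:=-\inf u=\sup(-u)\ge0$ for a zero–mean function $u$. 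Now the key elementary inequality is that for a zero–mean function $u$ on $[0,1]$ one has $\sup(-u)\ge\tfrac12\|u\|_1$: indeed $\int u^+=\int u^-=\tfrac12\|u\|_1$ and $\int u^-\le\sup(u^-)=\sup(-u)$. Applying this to $g_f$ and $h_f$, \eqref{pf1} gives $\sys(\gmetric)^2\le(m-\tfrac12\|g_f\|_1)(m-\tfrac12\|h_f\|_1)$. Expanding, dropping the nonnegative cross term $\tfrac14\|g_f\|_1\|h_f\|_1$, and using $2ab\le a^2+b^2$ in the form $\tfrac12 m(\|g_f\|_1+\|h_f\|_1)\ge$ a symmetric bound, one is led to compare $\tfrac12 m\big(\|g_f\|_1+\|h_f\|_1\big)$ with $\tfrac1{16}|\PPP(f)|_1^2=\tfrac1{16}\|g_f+h_f\|_1^2\le\tfrac1{16}(\|g_f\|_1+\|h_f\|_1)^2$. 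Since $\|g_f\|_1\le m$ and $\|h_f\|_1\le m$ (each follows from $0<f$ forcing $m+g_f=\ell_1(c)\ge0$, hence $-g_f\le m$, so $\int g_f^-\le m$ and $\|g_f\|_1=2\int g_f^-\le 2m$; in fact the sharper $\|g_f\|_1\le m$ will need the argument below), we get $\tfrac1{16}(\|g_f\|_1+\|h_f\|_1)^2\le\tfrac{2m}{16}(\|g_f\|_1+\|h_f\|_1)=\tfrac m8(\|g_f\|_1+\|h_f\|_1)\le\tfrac m2(\|g_f\|_1+\|h_f\|_1)$, which closes the estimate: $\sys(\gmetric)^2\le m^2-\tfrac m2(\|g_f\|_1+\|h_f\|_1)\le m^2-\tfrac1{16}|\PPP(f)|_1^2$, i.e. (16).

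The step I expect to be the main obstacle is pinning down the correct bound on $\|g_f\|_1$ (and $\|h_f\|_1$) in terms of $m$, together with tracking whether the constant $\tfrac1{16}$ — as opposed to something slightly larger — is really what survives once the cross term $\tfrac14\|g_f\|_1\|h_f\|_1$ is discarded and the triangle inequality $\|g_f+h_f\|_1\le\|g_f\|_1+\|h_f\|_1$ is used. The honest version of the argument should probably avoid discarding the cross term prematurely: keep $\sys(\gmetric)^2\le\big(m-\tfrac12\|g_f\|_1\big)\big(m-\tfrac12\|h_f\|_1\big)$ and minimize the right side over all admissible pairs $(\|g_f\|_1,\|h_f\|_1)$ subject to $\|g_f+h_f\|_1$ fixed, which is where the factor $\tfrac1{16}=\big(\tfrac14\big)^2$ naturally appears (the worst case being $\|g_f\|_1=\|h_f\|_1=\tfrac12\|g_f+h_f\|_1$, giving $\sys^2\le\big(m-\tfrac14|\PPP(f)|_1\big)^2\le m^2-\tfrac1{16}|\PPP(f)|_1^2$ after dropping the positive square). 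I would also double-check the degenerate cases where $g_f$ or $h_f$ vanishes identically (so $\PPP(f)$ reduces to one variable and (16) degenerates to the earlier one-variable refinement) and the case $\PPP(f)=0$, where (16) collapses to the basic square-torus inequality.
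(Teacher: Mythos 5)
Your final ``honest version'' is a correct proof; your first-pass computation is not, for exactly the two reasons you flag yourself. Dropping the cross term from $(m-\tfrac12\|g_f\|_1)(m-\tfrac12\|h_f\|_1)=m^2-\tfrac m2(\|g_f\|_1+\|h_f\|_1)+\tfrac14\|g_f\|_1\|h_f\|_1$ goes the wrong way: the cross term is nonnegative, so discarding it yields a \emph{lower} bound, not the asserted upper bound $\sys(\gmetric)^2\le m^2-\tfrac m2(\|g_f\|_1+\|h_f\|_1)$. And the bound $\|g_f\|_1\le m$ is not only unproved but false in general --- positivity of $\ell_1(c)=m+g_f(c)$ only gives $\sup(-g_f)<m$, hence $\|g_f\|_1=2\int g_f^-<2m$, and a conformal factor concentrated near a point shows $2m$ is essentially sharp. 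The repair is the one you sketch at the end: from $\sys(\gmetric)\le m-\tfrac12\|g_f\|_1$ and $\sys(\gmetric)\le m-\tfrac12\|h_f\|_1$ (both right-hand sides being $\ge\sys(\gmetric)>0$), averaging gives $\sys(\gmetric)\le m-\tfrac14(\|g_f\|_1+\|h_f\|_1)\le m-\tfrac14|\PPP(f)|_1$ by the triangle inequality, and then $\sys(\gmetric)^2\le\bigl(m-\tfrac14|\PPP(f)|_1\bigr)\bigl(m+\tfrac14|\PPP(f)|_1\bigr)=m^2-\tfrac1{16}|\PPP(f)|_1^2$, which together with $\area(\gmetric)=m^2+\var(f)$ is \eqref{16}.

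Once repaired, your route is essentially the paper's, resting on the same three ingredients: the decomposition $f=m+g_f+h_f+k_f$, Lemma~\ref{91} (your $\sup(-u)\ge\tfrac12\|u\|_1$ for zero-mean $u$), and the observation that the two coordinate families of loops have lengths $m+g_f(c)$ and $m+h_f(c)$. The only genuine difference is how the two axes are combined. The paper breaks the symmetry: it assumes without loss of generality that $\|h_f\|_1\ge\tfrac12|\PPP(f)|_1$, passes to the averaged metric $\barf^2ds^2$ with $\barf(y)=\int_0^1f(x,y)\,dx$ (whose systole is your $\inf_c\ell_2(c)$ and dominates $\sys(\gmetric)$), and deduces $m\ge\sys(\gmetric)+\tfrac14|\PPP(f)|_1$ directly from the one-variable case. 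You keep both families and symmetrize by an arithmetic--geometric mean step. The two devices are interchangeable and produce the same constant $\tfrac1{16}$; the paper's version has the small advantage of exhibiting the stronger intermediate inequality $\area(\gmetric)-\var(f)\ge\bigl(\sys(\gmetric)+\tfrac14|\PPP(f)|_1\bigr)^2$.
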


Theorem~\ref{11c} is proved in Section~\ref{ten}.  

Marcel Berger's monograph \cite[ pp.~325-353]{Be6} contains a detailed
exposition of the state of systolic affairs up to '03.  More recent
developments are covered in \cite{SGT}.  Recent publications in
systolic geometry include \cite{Be08, Bru, Bru2, Bru3, DKR, Ka4, RS,
Sa08, e7, AK, KK, KSh}.

\section{Variance, Hermite constant, successive minima}
\label{L333}

The proof of inequalities with isosystolic defect relies upon the
familiar computational formula for the variance of a random variable
in terms of expected values.  Keeping our differential geometric
application in mind, we will denote the random variable~$f$.  Namely,
we have the formula
\begin{equation}
\label{11}
E_\mu(f^2) - \left(E_\mu(f) \right)^2 = \var(f),
\end{equation}
where~$\mu$ is a probability measure.  Here the variance is
\[
\var(f)= E_\mu \left( (f-m)^2 \right),
\]
where~$m=E_\mu(f)$ is the expected value (\ie the mean).

Now consider a flat metric~$\gmetric_0$ of unit area on
the~$2$-torus~$\T^2$.  Denote the associated measure by~$\mu$.
Since~$\mu$ is a probability measure, we can apply formula~\eqref{11}
to it.  Consider a metric~$\gmetric = f^2 \gmetric_0$ conformal to the
flat one, with conformal factor~$f(x,y)>0$, and new measure~$f^2\mu$.
Then we have
\[
E_\mu (f^2) = \int_{\T^2} f^2 \mu = \area (\gmetric).
\]
Equation~\eqref{11} therefore becomes
\begin{equation}
\label{21}
\area(\gmetric) - \left( E_\mu(f) \right)^2 = \var(f).
\end{equation}

Next, we will relate the expected value~$E_\mu(f)$ to the systole of
the metric~$\gmetric$.  To proceed further, we need to deal with some
combinatorial preliminaries.  We will then relate \eqref{11} to
Loewner's torus inequality.

Let~$B$ be a finite-dimensional Banach space, \ie a vector space
together with a norm~$\|\;\|$.  Let~$L\subset (B,\|\;\|)$ be a lattice
of maximal rank, \ie satisfying~$\rk(L)=\dim(B)$.  We define the
notion of successive minima of~$L$ as follows.

\begin{definition}
\label{mindef}
For each~$k=1,2,\dots, \rk(L)$, define the {\em~$k$-th successive
minimum\/}\index{successive minima|textbf} of the lattice~$L$ by
\begin{equation}
\label{success}
\lambda_k(L,\|\;\|) = \inf\bigg\{\lambda\in\R \ \left| \,
\begin{array}{l}
\exists \text{ lin. indep. } v_1, \ldots, v_k \in L \\ \text{\ with\ }
\|v_i\|\leq \lambda \text{\ for\ all\ }i
\end{array}  
\right. \bigg\}.
\end{equation}
\end{definition}

Thus the first successive minimum,~$\lambda_1(L,\|\;\|)$ is the least
length of a nonzero vector in~$L$.

\begin{definition}
Let~$b\in \N$.  The {\em Hermite constant}~$\gamma_b$ is defined in
one of the following two equivalent ways:
\begin{enumerate}
\item
$\gamma_b$ is the {\em square\/} of the biggest first successive
minimum\index{successive minima}, \cf Defi\-nition~\ref{mindef}, among
all lattices of unit covolume;
\item
$\gamma_b$ is defined by the formula
\begin{equation}
\label{421}
\sqrt{\gamma_b} = \sup\left\{ \left. 
\frac{\lambda_1(L)}{\vol(\R^b/L) ^{{1}/{b}}} 
\right| L \subseteq (\R ^b, \|\;\|) \right\},
\end{equation}
where the supremum is extended over all lattices~$L$ in~$\R^b$ with a
Euclidean norm~$\|\;\|$.
\end{enumerate}
\end{definition}

A lattice realizing the supremum is called a {\em critical\/}
lattice.\index{critical lattice|textbf} A critical lattice may be
thought of as the one realizing the densest packing in~$\R^b$ when we
place balls of radius~$\frac{1}{2}\lambda_1(L)$ at the points of~$L$.

\section{Standard fundamental domain and Eisenstein integers}
\label{four}

\begin{definition}
The lattice of the {\em Eisenstein integers\/} is the lattice in~$\C$
spanned by the elements~$1$ and the sixth root of unity.
\end{definition}

To visualize the lattice, start with an equilateral triangle in~$\C$
with vertices~$0$,~$1$, and~$\tfrac{1}{2}+i\tfrac{\sqrt{3}}{2}$, and
construct a tiling of the plane by repeatedly reflecting in all sides.
The Eisenstein integers are by definition the set of vertices of the
resulting tiling.

The following result is well-known.  We reproduce a proof here since
it is an essential part of the proof of Loewner's torus inequality
with isosystolic defect.

\begin{lemma}
\label{217}
When~$b=2$, we have the following value for the Hermite
constant:~$\gamma_2=\frac{2}{\sqrt{3}}=1.1547\ldots$.  The
corresponding critical lattice is homothetic to the~$\Z$-span of the
cube roots of unity in~$\C$, \ie the Eisenstein integers.
\end{lemma}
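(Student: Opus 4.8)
The plan is to prove that $\gamma_2 = \tfrac{2}{\sqrt 3}$ by establishing the two inequalities $\gamma_2 \ge \tfrac{2}{\sqrt 3}$ and $\gamma_2 \le \tfrac{2}{\sqrt 3}$ separately, with the extremal (critical) lattice turning out to be the Eisenstein lattice. For the lower bound, I would simply exhibit the hexagonal lattice $L_0$ spanned by $1$ and $\zeta = \tfrac12 + i\tfrac{\sqrt 3}{2}$ (a primitive sixth root of unity) and compute its covolume and first successive minimum directly: the fundamental parallelogram has area $\vol(\C/L_0) = \Imag(\bar 1 \cdot \zeta) = \tfrac{\sqrt 3}{2}$, and every nonzero Eisenstein integer has squared modulus at least $1$ (since $|a + b\zeta|^2 = a^2 + ab + b^2$ is a positive-definite integral form with minimum $1$), so $\lambda_1(L_0) = 1$. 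Plugging into \eqref{421} gives $\sqrt{\gamma_2} \ge \lambda_1(L_0)/\vol(\C/L_0)^{1/2} = 1/\sqrt{\sqrt3/2} = (2/\sqrt3)^{1/2}$, hence $\gamma_2 \ge \tfrac{2}{\sqrt3}$.

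For the upper bound I would argue by a normalization/reduction argument. Given an arbitrary lattice $L \subset \R^2$ with a Euclidean norm, rescale so that $\lambda_1(L) = 1$ and pick $v_1 \in L$ realizing this minimum; extend to a basis $v_1, v_2$ of $L$ with $v_2$ of minimal length among lattice vectors not in $\Z v_1$ (a reduced basis). By subtracting an integer multiple of $v_1$ from $v_2$ one may assume the orthogonal projection of $v_2$ onto $v_1$ has length at most $\tfrac12 |v_1| = \tfrac12$. Then minimality of $|v_1|=1$ forces $|v_2| \ge 1$, so writing $v_2 = a v_1 + w$ with $w \perp v_1$ and $|a| \le \tfrac12$, we get $|w|^2 = |v_2|^2 - a^2 \ge 1 - \tfrac14 = \tfrac34$, hence $|w| \ge \tfrac{\sqrt3}{2}$. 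Therefore $\vol(\R^2/L) = |v_1|\,|w| \ge \tfrac{\sqrt3}{2}$, which rearranges to $\sqrt{\gamma_2} \le (2/\sqrt3)^{1/2}$, i.e. $\gamma_2 \le \tfrac{2}{\sqrt3}$. Combining the two bounds gives $\gamma_2 = \tfrac{2}{\sqrt3} = 1.1547\ldots$.

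Finally, for the identification of the critical lattice, I would trace through the equality case of the upper-bound argument: equality in $\vol \ge \tfrac{\sqrt3}{2}$ (after normalizing $\lambda_1 = 1$) forces $|v_2| = 1$ and $|a| = \tfrac12$, so $v_1$ and $v_2$ are both shortest vectors and the angle between them is $\tfrac{\pi}{3}$ (or $\tfrac{2\pi}{3}$). That is precisely the hexagonal lattice, homothetic to the $\Z$-span of the cube roots of unity (equivalently the Eisenstein integers, since $\{1,\zeta\}$ and $\{1, \zeta^2\}$ span the same lattice up to the symmetry of the configuration). The one point demanding a little care — the main obstacle, such as it is — is justifying the reduction to a basis with $|a| \le \tfrac12$ and the claim that the second reduced basis vector has length $\ge \lambda_1$: this is the standard Gauss/Lagrange lattice-reduction step in dimension $2$, and I would state it cleanly rather than belabor it, since everything downstream is an elementary computation.
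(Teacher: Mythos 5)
Your proof is correct and is essentially the same argument as the paper's: you normalize so that $\lambda_1(L)=1$, complete the shortest vector to a basis, translate the second basis vector so that its component along the first has absolute value at most $\tfrac12$, and conclude that the covolume is at least $\tfrac{\sqrt3}{2}$ with equality exactly for the hexagonal lattice. The only difference is presentational -- the paper works in $\C$ and places $\tau$ in the standard fundamental domain for ${\rm PSL}(2,\Z)$, whereas you phrase the same reduction via orthogonal projection (Gauss/Lagrange reduction) and, helpfully, make the attainment of the bound explicit by computing the Eisenstein lattice directly.
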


\begin{proof}
Consider a lattice~$L\subset \C=\R^2$.  Clearly, multiplying~$L$ by
nonzero complex numbers does not change the value of the quotient
\[
\frac{\lambda_1(L)^2}{\area(\C/L)}.
\]
Choose a ``shortest'' vector~$z \in L$, \ie we have~$|z|= \lambda_1
(L)$.  By replacing~$L$ by the lattice~$z^{-1}L$, we may assume that
the complex number~$+1\in \C$ is a shortest element in the lattice.
We will denote the new lattice by the same letter~$L$, so that now
$\lambda_1(L)=1$.  Now complete the element~$+1\in L$ to a~$\Z$-basis
\begin{equation}
\label{31b}
\{\tau, +1\}
\end{equation}
for~$L$.  Thus~$|\tau|\geq \lambda_1(L)=1$.  Consider the real
part~$\Re(\tau)$.  Clearly, we can adjust the basis by adding a
suitable integer to~$\tau$, so as to satisfy the condition~$-\tfrac
{1}{2} \leq \Re(\tau)\leq \tfrac{1}{2}$.  Then the basis vector~$\tau$
lies in the closure of the standard fundamental domain
\begin{equation}
\label{fd}
D=\left\{ z\in\C \left|\; |z| > 1, \; | \Re(z) | < \tfrac{1}{2},\;
\Im(z) > 0 \right. \right\}
\end{equation}
for the action of the group~${\rm PSL}(2,\Z)$ in the upperhalf plane
of~$\C$.  The imaginary part satisfies~$\Im(\tau)\geq
\frac{\sqrt{3}}{2}$, with equality possible in the following two
cases:~$\tau=e^{i\frac{\pi}{3}}$ or~$\tau=e^{i\frac {2\pi}{3}}$.
Finally, we calculate the area of the parallelogram in~$\C$ spanned
by~$\tau$ and~$+1$, and write 
\[
\frac{\area(\C/L)}{\lambda_1(L)^2} = \Im (\tau) \geq
\frac{\sqrt{3}}{2}
\]
to conclude the proof.
\end{proof}

\section{Fundamental domain and Loewner's torus inequality}

We now return to the proof of Loewner's torus inequality for the
metric~$\gmetric = f^2 \gmetric_0$ using the computational formula for
the variance.  Let us analyze the expected value term~$E_\mu(f) =
\int_{\T^2} f \mu$ in \eqref{21}.

By the proof of Lemma~\ref{217}, the lattice of deck transformations
of the flat torus~$\gmetric_0$ admits a~$\Z$-basis similar to~$\{\tau,
1\} \subset \C$, where~$\tau$ belongs to the standard fundamental
domain \eqref{fd}.  In other words, the lattice is similar to
\[
\Z\tau + \Z1 \subset \C.
\]
Consider the imaginary part~$\Im(\tau)$ and set
\[
\sigma^2:=\Im(\tau)>0.
\]
From the geometry of the fundamental domain it follows that~$\sigma^2
\geq \tfrac{\sqrt{3}}{2}$, with equality if and only if~$\tau$ is the
primitive cube or sixth root of unity.  Since~$\gmetric_0$ is assumed
to be of unit area, the basis for its group of deck tranformations can
therefore be taken to be
\[
\{ \sigma^{-1}\tau, \sigma^{-1}\},
\]
where~$\Im ( {\sigma} ^{-1} {\tau} ) =\sigma$.  We will prove the
following generalisation of Loewner's bound.

\begin{theorem}
Every metric $\gmetric$ on the torus satisfies the inequality
\begin{equation}
\area(\gmetric) - \sigma^2 \sys(\gmetric)^2 \geq \var(f),
\end{equation}
where~$f$ is the conformal factor of the metric~$\gmetric$ with
respect to the unit area flat metric~$\gmetric_0$.  
\end{theorem}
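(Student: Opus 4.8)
The plan is to deduce the claimed inequality from a single lower bound on the mean of the conformal factor. By the computational formula for the variance we already have the identity~\eqref{21}, namely $\area(\gmetric)-\big(E_\mu(f)\big)^2=\var(f)$, where $\mu$ denotes the probability measure of the unit-area flat metric $\gmetric_0$. Since $f>0$ we have $E_\mu(f)>0$, so it suffices to prove
\[
E_\mu(f)\ \geq\ \sigma\,\sys(\gmetric);
\]
squaring this inequality and substituting into~\eqref{21} gives $\area(\gmetric)-\sigma^2\sys(\gmetric)^2\geq\var(f)$, as desired. In particular, since $\sigma^2=\Im(\tau)\geq\tfrac{\sqrt3}{2}$ by Lemma~\ref{217} and the geometry of the fundamental domain~\eqref{fd}, this simultaneously recovers Loewner's bound~\eqref{11L}.

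To prove the bound on $E_\mu(f)$ I would use the normalisation recalled just before the statement: after rescaling the deck lattice by a complex scalar, $(\T^2,\gmetric_0)=\R^2/L$, where $L$ is spanned by $\{\sigma^{-1}\tau,\ \sigma^{-1}\}$ with $\tau$ in the standard fundamental domain~\eqref{fd}. Writing $\tau=a+i\sigma^2$ with $|a|\leq\tfrac12$, the generator $\sigma^{-1}$ is horizontal, and a direct check shows that the half-open rectangle $R=[0,\sigma^{-1})\times[0,\sigma)\subset\R^2$ is a fundamental domain for $L$, of Lebesgue area $\sigma^{-1}\cdot\sigma=1$; on $R$ the measure $\mu$ is simply $dx\,dt$.

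For each height $t\in[0,\sigma)$ let $\gamma_t$ be the image in $\T^2$ of the horizontal line $\{x+it:x\in\R\}$. Since $\sigma^{-1}\in L$, the curve $\gamma_t$ is a closed loop of flat length $\sigma^{-1}$ representing the primitive, hence noncontractible, class of $\sigma^{-1}$ in $\pi_1(\T^2)$; its length in the metric $\gmetric=f^2\gmetric_0$ is $\length_{\gmetric}(\gamma_t)=\int_0^{\sigma^{-1}}f(x,t)\,dx\geq\sys(\gmetric)$. The family $\{\gamma_t\}_{t\in[0,\sigma)}$ foliates $\T^2$ (the heights $t$ and $t+\sigma$ give the same loop because $\Im(\sigma^{-1}\tau)=\sigma$), so Fubini on $R$ yields
\[
E_\mu(f)=\int_{\T^2}f\,\mu=\int_0^{\sigma}\!\!\int_0^{\sigma^{-1}}f(x,t)\,dx\,dt=\int_0^{\sigma}\length_{\gmetric}(\gamma_t)\,dt\ \geq\ \sigma\,\sys(\gmetric).
\]

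There is no serious obstacle here: this is essentially the classical Fubini proof of Loewner's inequality, the only novelty being that the defect term $\var(f)$ in~\eqref{21} is retained rather than discarded. The points requiring care are purely bookkeeping: checking that $R$ is genuinely a fundamental domain (so that the double integral over $R$ really equals $\int_{\T^2}f\,\mu$) and that the height parameter ranges over exactly $[0,\sigma)$ — this is precisely where $\sigma^2=\Im(\tau)$ enters — together with the trivial observation that $E_\mu(f)>0$, which legitimises squaring the inequality $E_\mu(f)\geq\sigma\,\sys(\gmetric)$.
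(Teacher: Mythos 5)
Your proposal is correct and follows essentially the same route as the paper: the identity~\eqref{21} combined with the Fubini bound $E_\mu(f)=\int_0^\sigma \length_\gmetric(\gamma_t)\,dt\geq\sigma\,\sys(\gmetric)$ over the pencil of horizontal closed geodesics of flat length $\sigma^{-1}$ and width $\sigma$. The only difference is that you spell out the fundamental-domain bookkeeping (that the rectangle $[0,\sigma^{-1})\times[0,\sigma)$ has unit area and that the loops $\gamma_t$ are noncontractible) which the paper leaves implicit.
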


\begin{proof}
With the normalisations described above, we see that the flat torus is
ruled by a pencil of horizontal closed geodesics,
denoted~$\gamma_y=\gamma_y(x)$, each of length~$\sigma^{-1}$, where
the ``width'' of the pencil equals~$\sigma$, i.e. the parameter~$y$
ranges through the interval~$[0,\sigma]$,
with~$\gamma_\sigma=\gamma_0$.

By Fubini's theorem, we obtain the following lower bound for the
expected value:
\[
\begin{aligned}
E_\mu(f) &= \int_0^\sigma \left( \int_{\gamma_y} f(x)dx \right) dy
\\&= \int_0^\sigma \length(\gamma_y)dy
\\&\geq \sigma \sys(\gmetric),
\end{aligned}
\]
Substituting into \eqref{21}, we obtain the inequality
\begin{equation}
\label{51b}
\area(\gmetric) - \sigma^2 \sys(\gmetric)^2 \geq \var(f),
\end{equation}
where~$f$ is the conformal factor of the metric~$\gmetric$ with
respect to the unit area flat metric~$\gmetric_0$.  
\end{proof}

Since~$\sigma^2 \geq \tfrac{\sqrt{3}}{2}$, we obtain in particular a
strengthening of Loewner's torus inequality, namely the following
inequality with isosystolic defect:
\begin{equation}
\label{51}
\area(\gmetric) - \tfrac{\sqrt{3}}{2} \sys(\gmetric)^2 \geq \var(f),
\end{equation}
as discussed in the introduction.

\begin{corollary}
A metric satisfying the boundary case of equality in Loewner's torus
inequality~\eqref{11L} is necessarily flat and homothetic to the
quotient of~$\R^2$ by the lattice of Eisenstein integers.
\end{corollary}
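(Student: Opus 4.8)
The plan is to read the rigidity statement off directly from the proof of inequality~\eqref{51b} together with the lower bound $\sigma^2 \geq \tfrac{\sqrt{3}}{2}$ coming from the geometry of the fundamental domain. First I would write $\gmetric = f^2\gmetric_0$ as in Theorem~\ref{11u}, with $\gmetric_0$ the unit area flat metric conformal to $\gmetric$ and $\sigma^2 = \Im(\tau)$ its modular invariant, and assemble the chain
\begin{equation*}
\area(\gmetric) - \tfrac{\sqrt{3}}{2}\sys(\gmetric)^2 \;\geq\; \area(\gmetric) - \sigma^2\sys(\gmetric)^2 \;\geq\; \var(f) \;\geq\; 0,
\end{equation*}
where the first inequality uses $\sigma^2 \geq \tfrac{\sqrt{3}}{2}$ and the second is~\eqref{51b}. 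Since $\gmetric$ is an honest Riemannian metric on the compact torus, both $\area(\gmetric)>0$ and $\sys(\gmetric)>0$, so equality in~\eqref{11L} is not vacuous: it forces every term in the chain to vanish.

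Next I would extract the two consequences of this simultaneous vanishing. From $\var(f)=0$ one gets that $f$ equals its mean $E_\mu(f)$ almost everywhere, hence identically since the conformal factor is continuous; thus $\gmetric$ is a constant multiple of the flat metric $\gmetric_0$, in particular flat. From the equality of the first two terms together with $\sys(\gmetric)>0$ one gets $\sigma^2 = \tfrac{\sqrt{3}}{2}$, i.e.\ $\Im(\tau)=\tfrac{\sqrt{3}}{2}$ for the modulus $\tau$ of $\gmetric_0$.

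Finally I would invoke the boundary-case analysis already carried out in the proof of Lemma~\ref{217}: for $\tau$ in the closure of the standard fundamental domain~\eqref{fd}, the equality $\Im(\tau)=\tfrac{\sqrt{3}}{2}$ holds only for $\tau = e^{i\pi/3}$ or $\tau = e^{2i\pi/3}$, and in both cases the lattice $\Z\tau+\Z1$ is the lattice of Eisenstein integers (the two values of $\tau$ differ by $1$). Hence $\gmetric_0$, and with it the constant multiple $\gmetric$, is homothetic to the quotient of $\R^2$ by the Eisenstein lattice. I do not expect a serious obstacle here; the one point needing a little care is the observation that equality in~\eqref{11L} can only occur when \emph{both} strengthenings --- $\var(f)=0$ and $\sigma^2=\tfrac{\sqrt{3}}{2}$ --- hold at once, which is precisely why it matters to record that $\area(\gmetric)$ and $\sys(\gmetric)$ are strictly positive.
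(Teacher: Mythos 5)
Your proposal is correct and follows essentially the same route as the paper: the vanishing of $\var(f)$ via the variance identity forces $f$ to be constant, and the fundamental-domain analysis underlying Lemma~\ref{217} (equality $\Im(\tau)=\tfrac{\sqrt{3}}{2}$ only at the primitive cube or sixth root of unity) identifies the lattice as the Eisenstein one. The only difference is presentational: you extract the equality $\sigma^2=\tfrac{\sqrt{3}}{2}$ explicitly from the chain of inequalities, whereas the paper simply cites Lemma~\ref{217} after concluding that $f$ is constant.
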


\begin{proof}
If a metric~$f^2 ds^2$ satisfies the boundary case of equality in
\eqref{11L}, then the variance of the conformal factor~$f$ must vanish
by \eqref{51}.  Hence~$f$ is a constant function.  The proof is
completed by applying Lemma~\ref{217}.
\end{proof}

Now suppose~$\tau$ is pure imaginary, \ie the lattice~$L$ is a
rectangular lattice of coarea~$1$.  Note that this property for a
coarea~$1$ lattice is equivalent to the equality~$\lambda_1(L)
\lambda_2(L)=1$.

\begin{corollary}
If~$\tau$ is pure imaginary, then the metric~$\gmetric=f^2 \gmetric_0$
satisfies the inequality
\begin{equation}
\label{52z}
\area(\gmetric) - \sys(\gmetric)^2 \geq \var(f).
\end{equation}
\end{corollary}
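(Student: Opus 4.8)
The plan is to read off \eqref{52z} directly from the generalised Loewner bound \eqref{51b} proved just above, namely $\area(\gmetric) - \sigma^2\sys(\gmetric)^2 \geq \var(f)$ with $\sigma^2 = \Im(\tau)$, the only additional point being to check that the hypothesis on $\tau$ forces $\sigma^2 \geq 1$. Once that is known, the coefficient $\sigma^2$ of $\sys(\gmetric)^2$ may be replaced by the smaller quantity $1$, which only weakens the left-hand side, and \eqref{52z} follows.

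First I would invoke the normalisation preceding \eqref{51b}: the basis vector $\tau$ of the deck lattice may be taken in the closure of the standard fundamental domain \eqref{fd}, so that $|\tau| \geq 1$. If moreover $\tau = it$ is pure imaginary with $t>0$, then $\sigma^2 = \Im(\tau) = t = |\tau| \geq 1$. Equivalently, as noted in the Remark before the statement, a rectangular coarea-$1$ lattice satisfies $\lambda_1(L)\lambda_2(L)=1$, and $\sigma^2$ is precisely the ratio $\lambda_2(L)/\lambda_1(L)\geq 1$. Then I would conclude from $\sigma^2 \geq 1$ and $\sys(\gmetric)^2 \geq 0$ that $\sigma^2\sys(\gmetric)^2 \geq \sys(\gmetric)^2$, whence $\area(\gmetric) - \sys(\gmetric)^2 \geq \area(\gmetric) - \sigma^2\sys(\gmetric)^2 \geq \var(f)$, which is exactly \eqref{52z}.

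There is essentially no obstacle here: all the analytic content sits in \eqref{51b}, which is already established, and the remaining input is the elementary observation that a point of $\overline{D}$ lying on the imaginary axis has imaginary part at least $1$ (since there $|z| = \Im(z)$ and $|z|>1$ on $D$, $|z|\geq 1$ on its closure). The one place to keep one's wits is the direction of the inequality: because $\sigma^2 \geq 1$ makes $-\sigma^2\sys(\gmetric)^2$ no larger than $-\sys(\gmetric)^2$, passing from \eqref{51b} to \eqref{52z} genuinely weakens the bound, so the implication is legitimate and not circular.
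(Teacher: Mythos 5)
Your argument is correct and is essentially the paper's own proof: the paper likewise notes that $\tau$ pure imaginary forces $\sigma\geq 1$ and then reads off \eqref{52z} from \eqref{51b}. Your extra justification that a point of the closed fundamental domain on the imaginary axis satisfies $\Im(\tau)=|\tau|\geq 1$ simply spells out the step the paper leaves implicit.
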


\begin{proof}
If~$\tau$ is pure imaginary then~$\sigma\geq 1$, and the inequality
follows from \eqref{51b}.
\end{proof}

In particular, every surface of revolution satisfies \eqref{52z},
since its lattice is rectangular, \cf Corollary~\ref{72}.

\section{First fundamental form
and surfaces of revolution}
\label{five}

This elementary section is concerned mainly with surfaces of
revolution and an explicit construction of isothermal coordinates on
such surfaces.  Recall that the first fundamental form of a regular
parametrized surface~$\underline x(u^1, u^2)$ in~$\R^3$ is the
bilinear form on the tangent plane defined by the restriction of the
ambient inner product~$\langle\;,\;\rangle$.  With respect to the
basis~$\{x_1, x_2\}$, where~$x_i= \frac{\partial x}{\partial u^i}$, it
is given by the two by two matrix~$(g_{ij})$, where~$g_{ij} = \langle
x_i,x_j \rangle$ are the metric coefficients.

In the special case of a surface of revolution, it is customary to use
the notation~$u^1 = \theta$ and~$u^2 = \varphi$.  The starting point
is a curve~$C$ in the~$xz$-plane, parametrized by a pair of
functions~$x = f(\varphi)$,~$z = g(\varphi)$.  We will assume
that~$f(\varphi)>0$.  The surface of revolution (around the~$z$-axis)
defined by~$C$ is parametrized as follows :~$\underline
x(\theta,\varphi) = (f(\varphi) \cos \theta, f(\varphi) \sin \theta,
g(\varphi))$.  The condition~$f(\varphi)>0$ ensures that the resulting
surface is an imbedded torus, provided the original curve~$C$ itself
is a Jordan curve.  The pair of functions~$(f,g)$ gives an arclength
parametrisation of the curve if~$\left(\frac{df}{d\varphi}\right)^2 +
\left(\frac{dg}{d\varphi}\right)^2 =1$.  For example,
setting~$f(\varphi) = \sin \varphi$ and~$g(\varphi) = \cos \varphi$,
we obtain a parametrisation of the sphere~$S^2$ in spherical
coordinates.  To calculate the first fundamental form of a surface of
revolution, note that~$x_1 = {\partial x \over \partial \theta} =(-f
\sin \theta, f \cos \theta, 0)$, while~$x_2 = {\partial x \over
\partial \varphi} = \left({df \over d\varphi} \cos \theta, {df \over d
\varphi} \sin \theta, {dg \over d\varphi} \right)$, so that we
have~$g_{11} = f^2 \sin^2 \theta + f^2 \cos^2 \theta = f^2$, while
$g_{22} = \left({df \over d\varphi} \right)^2 (\cos^2 \theta + \sin^2
\theta) + \left({dg \over d\varphi}\right)^2 = \left({df \over
d\varphi}\right)^2 + \left({dg \over d\varphi}\right)^2$ and~$g_{12} =
-f {df \over d\varphi} \sin \theta \cos \theta + f {df \over d\varphi}
\cos \theta \sin \theta = 0$.  Thus we obtain the first fundamental
form
\begin{equation}
\label{64}
(g_{ij}) =
\begin{pmatrix}
f^2 & 0 \cr 0 & \left({df \over d\varphi}\right)^2 + \left({dg \over
 d\varphi}\right)^2 \cr
\end{pmatrix}.
\end{equation}
We have the following obvious lemma.
\begin{lemma}
For a surface of revolution obtained from a unit speed parametrisation
$(f(\varphi),g(\varphi))$ of the generating curve, we obtain the
following matrix of the coefficients of the first fundamental form:
\[
(g_{ij}) =
\begin{pmatrix}
f^2 & 0 \cr 0 & 1 \cr
\end{pmatrix}.
\]
\end{lemma}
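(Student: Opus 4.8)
The plan is to substitute the unit-speed hypothesis directly into the general expression~\eqref{64} for the first fundamental form of a surface of revolution, which has already been derived above. First I would recall what~\eqref{64} asserts: for the surface parametrised by $\underline x(\theta,\varphi)=(f(\varphi)\cos\theta,\, f(\varphi)\sin\theta,\, g(\varphi))$, with \emph{no} assumption on the pair $(f,g)$, one has $g_{11}=f^2$, $g_{12}=g_{21}=0$, and $g_{22}=\left(\frac{df}{d\varphi}\right)^2+\left(\frac{dg}{d\varphi}\right)^2$.

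The only additional input is the definition of a unit-speed (arclength) parametrisation of the generating curve~$C$, namely the identity $\left(\frac{df}{d\varphi}\right)^2+\left(\frac{dg}{d\varphi}\right)^2=1$, recorded in the discussion preceding~\eqref{64}. Since the parameter~$\varphi$ appearing in the parametrisation of the surface is literally the same parameter as the arclength parameter of~$C$, this identity transfers verbatim, and plugging it into the lower-right entry of the matrix in~\eqref{64} gives $g_{22}=1$. The entries $g_{11}=f^2$ and $g_{12}=0$ are unaffected, so
\[
(g_{ij})=\begin{pmatrix} f^2 & 0 \\ 0 & 1 \end{pmatrix},
\]
as claimed.

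There is no genuine obstacle here — the lemma is, as the text says, obvious — so the only thing to keep straight is the bookkeeping: one must check that the $\varphi$ used in the surface parametrisation has not been reparametrised relative to the $\varphi$ used for the profile curve, so that the unit-speed condition may be imported without change. Geometrically the content is merely that the meridians of the surface are traced at unit speed by~$\varphi$, which is precisely why this coordinate system is a convenient starting point for the explicit construction of isothermal coordinates carried out in the remainder of the section.
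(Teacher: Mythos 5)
Your proof is correct and follows exactly the route the paper intends: the text derives the general form \eqref{64} immediately before the lemma and labels the lemma ``obvious'' precisely because one only needs to insert the arclength identity $\left(\frac{df}{d\varphi}\right)^2+\left(\frac{dg}{d\varphi}\right)^2=1$ into the entry $g_{22}$. Nothing further is needed.
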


The following lemma expresses the metric of a surface of revolution in
isothermal coordinates.

\begin{lemma}
Suppose~$(f(\varphi),g(\varphi))$, where~$f(\varphi)> 0$, is an
arclength parametrisation of the generating curve of a surface of
revolution.  Then the change of variable
\[
\psi= \int \frac{ d\varphi}{f(\varphi)}
\]
produces a new parametrisation (in terms of variables~$\theta, \psi$),
with respect to which the first fundamental form is given by a scalar
matrix~$(g_{ij})=(f^2\delta_{ij})$.
\end{lemma}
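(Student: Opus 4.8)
The plan is to start from the first fundamental form computed in \eqref{64}, namely
\[
(g_{ij}) =
\begin{pmatrix}
f(\varphi)^2 & 0 \cr 0 & 1 \cr
\end{pmatrix},
\]
using the fact that $(f,g)$ is a unit speed parametrisation so that the lower right entry reduces to $1$ by the previous lemma. The metric in the coordinates $(\theta,\varphi)$ is therefore $ds^2 = f(\varphi)^2 d\theta^2 + d\varphi^2$, and the entire task is to find a substitution in the $\varphi$ variable alone that turns this into a conformal (scalar) form $f^2(d\theta^2 + d\psi^2)$.

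First I would introduce the new variable $\psi$ by the prescription $\psi = \int \frac{d\varphi}{f(\varphi)}$, which is legitimate since $f(\varphi) > 0$ guarantees the integrand is continuous and the antiderivative is a well-defined, strictly monotone function of $\varphi$ (hence invertible, so $(\theta,\psi)$ really is a parametrisation). Then $\frac{d\psi}{d\varphi} = \frac{1}{f(\varphi)}$, equivalently $d\varphi = f(\varphi)\, d\psi$, so $d\varphi^2 = f(\varphi)^2 d\psi^2$. Substituting into $ds^2 = f(\varphi)^2 d\theta^2 + d\varphi^2$ gives
\[
ds^2 = f(\varphi)^2 d\theta^2 + f(\varphi)^2 d\psi^2 = f(\varphi)^2 \left( d\theta^2 + d\psi^2 \right),
\]
which is exactly a scalar matrix $(g_{ij}) = (f^2 \delta_{ij})$ in the coordinates $(\theta, \psi)$, as claimed. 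Here $f$ is understood as a function of $\psi$ via the inverse substitution $\varphi = \varphi(\psi)$.

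There is really no substantial obstacle; this is the standard change-of-variables computation producing isothermal coordinates on a surface of revolution. The only points requiring a word of care are the invertibility of $\psi \mapsto \varphi$ (handled by $f > 0$) and keeping track of the fact that after the substitution $f$ is to be read as a function of the new variable $\psi$, so that the statement "the first fundamental form is $(f^2\delta_{ij})$" is interpreted with $f = f(\varphi(\psi))$. I would close by noting that the parameter $\theta$ was never touched, so the horizontal circles remain geodesics of constant length in the new coordinates, which is what makes this form useful for the systolic estimates of the previous section.
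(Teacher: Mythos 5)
Your proof is correct and essentially the same as the paper's: both come down to the chain-rule identity $d\varphi = f\,d\psi$ applied to the arclength form $f^2\,d\theta^2 + d\varphi^2$, the only difference being that the paper derives the substitution by imposing $g_{11}=g_{22}$ and solving for $\psi$, whereas you verify the stated substitution directly (adding the worthwhile observation that $f>0$ makes $\psi(\varphi)$ strictly monotone, hence invertible). One small caveat on your closing aside: the circles $\psi=\mathrm{const}$ are in general geodesics only of the flat background metric $d\theta^2+d\psi^2$, not of the surface metric $f^2(d\theta^2+d\psi^2)$ (that requires $f'=0$), but this remark is not part of the lemma and does not affect your argument.
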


In other words, we obtain an explicit conformal equivalence between
the metric on the surface of revolution and the standard flat metric
on the quotient of the~$(\theta, \psi)$ plane.  Such coordinates are
referred to as ``isothermal coordinates'' in the literature.  The
existence of such a parametrisation is of course predicted by the
uniformisation theorem (see Theorem~\ref{11u}) in the case of a
general surface.

\begin{proof}
Let~$\varphi=\varphi(\psi)$.  By chain rule,~$\frac{df}{d\psi} =
\frac{df}{d\varphi} \frac{d\varphi}{d\psi}$.  Now consider again the
first fundamental form \eqref{64}.  To impose the
condition~$g_{11}=g_{22}$, we need to solve the equation~$f^2=
\left(\frac{df}{d\psi}\right)^2 + \left( \frac{dg}{d\psi}\right)^2$,
or 
\[
f^2= \left( \left(\frac{df}{d\varphi}\right)^2 + \left( \frac{dg}
{d\varphi}\right)^2 \right) \left(\frac{d\varphi}{d\psi}\right)^2.
\]
In the case when the generating curve is parametrized by arclength, we
are therefore reduced to the equation~$f= \frac{d\varphi}{d\psi}$, or
$\psi= \int \frac{d\varphi}{f(\varphi)}$.  Replacing~$\varphi$
by~$\psi$, we obtain a parametrisation of the surface of revolution in
coordinates~$(\theta, \psi)$, such that the matrix of metric
coefficients is a scalar matrix.
\end{proof}

\begin{corollary}
\label{72}
Consider a torus of revolution in~$\R^3$ formed by rotating a Jordan
curve with unit speed parametisation~$(f(\varphi), g(\varphi))$ where
$\varphi\in [0,L]$, and~$L$ is the total length of the closed curve.
Then the torus is conformally equivalent to a flat torus defined by a
rectangular lattice
\[
a \Z \oplus b \Z,
\]
where~$a= 2\pi$ and~$b=\int_0^L \frac{d\varphi}{f(\varphi)}$.
\end{corollary}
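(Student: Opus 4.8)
The plan is to apply the isothermal-coordinate lemma proved just above to put the metric of the torus of revolution into the form $f^2(d\theta^2+d\psi^2)$, and then simply to read off the period lattice.

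First I would recall the set-up: the torus of revolution is the image of the parametrisation $\underline x(\theta,\varphi)=(f(\varphi)\cos\theta,\,f(\varphi)\sin\theta,\,g(\varphi))$, where $\theta$ is the rotation angle and $\varphi\in[0,L]$ is the arclength parameter along the generating Jordan curve $C$. Since $C$ is closed of total length $L$, the functions $f(\varphi)$ and $g(\varphi)$ are $L$-periodic, and the parametrisation is invariant under $\theta\mapsto\theta+2\pi$ and $\varphi\mapsto\varphi+L$. Because $f(\varphi)>0$ and $C$ is a Jordan curve, no further identifications occur, so the surface is the quotient of the $(\theta,\varphi)$-plane by the lattice $2\pi\Z\oplus L\Z$, carrying the first fundamental form \eqref{64}, which here reads $f^2d\theta^2+d\varphi^2$.

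Next I would perform the substitution $\psi=\int_0^{\varphi}\frac{d\varphi'}{f(\varphi')}$ supplied by the preceding lemma. Since $f>0$ is smooth and $L$-periodic, $\varphi\mapsto\psi(\varphi)$ is a smooth strictly increasing bijection of $\R$ onto $\R$ carrying translation by $L$ to translation by $b:=\psi(L)=\int_0^L\frac{d\varphi}{f(\varphi)}>0$; in particular it descends to a diffeomorphism $\R/L\Z\to\R/b\Z$. In the coordinates $(\theta,\psi)$ the lemma gives first fundamental form $f^2(d\theta^2+d\psi^2)$, with $f$ now viewed as a function of $\psi$ via $\varphi=\varphi(\psi)$. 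Hence the torus of revolution is isometric to $\R^2/(2\pi\Z\oplus b\Z)$ with the conformal metric $f^2(d\theta^2+d\psi^2)$, so its underlying flat torus is the one defined by the lattice $a\Z\oplus b\Z$ with $a=2\pi$; this lattice is rectangular because its two generators lie along, hence are orthogonal in, the Euclidean $(\theta,\psi)$-plane. That is exactly the assertion of the corollary.

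The only point needing care — and it is hardly an obstacle — is confirming that the change of variable $\psi(\varphi)$ genuinely respects the periodicity, so that it descends to the quotient tori and the conformal factor $f(\psi)$ is a well-defined positive smooth function on $\R^2/(2\pi\Z\oplus b\Z)$; this is immediate from $f>0$ together with the $L$-periodicity of the generating data. One should also note that $\psi$ is determined only up to an additive constant, which merely translates the fundamental domain and leaves the lattice unchanged.
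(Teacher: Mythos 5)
Your argument is correct and is exactly the route the paper intends: the corollary is stated as an immediate consequence of the preceding isothermal-coordinates lemma, with the substitution $\psi=\int d\varphi/f(\varphi)$ turning $f^2d\theta^2+d\varphi^2$ into $f^2(d\theta^2+d\psi^2)$ and the periods $2\pi$ in $\theta$ and $b=\int_0^L d\varphi/f(\varphi)$ in $\psi$ giving the rectangular lattice. Your added checks on periodicity and the additive constant in $\psi$ are fine details the paper leaves implicit.
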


\section{A second isosystolic defect term}

In the notation of Section~\ref{four}, assume for simplicity that
$\tau=i$, \ie the underlying flat metric is that of a unit square
torus~$\R^2/\Z^2$ where we think of~$\R^2$ as the~$(x,y)$ plane.  For
metrics in this conformal class, we will obtain an additional defect
term for Loewner's torus inequality.  First, we study a
metric~$\gmetric=f^2 ds^2$, defined by a conformal factor~$f(y)>0$,
where~$ds^2 = dx^2+dy^2$ is the standard flat metric and the conformal
factor only depends on one of the variables, as in the case of a
surface of revolution, see Section~\ref{five}.  Our estimate is based
on the following lemma.

\begin{lemma}
\label{91}
Let~$g$ be a continuous function with zero mean on the unit
interval~$[0,1]$.  Then we have the following bound in terms of
the~$L^1$ norm:
\[
\int_0^1 (g - \min{\!}_g) \geq \tfrac{1}{2} \left| g \right|_1.
\]
\end{lemma}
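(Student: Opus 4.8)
The plan is to reduce the claimed inequality to a statement about the positive and negative parts of $g$, using only the hypothesis that $g$ has zero mean. Write $m_g = \min_g \leq 0$ (it is $\leq 0$ because a continuous function with zero mean on $[0,1]$ cannot be everywhere positive). Then $g - m_g \geq 0$ everywhere, and
\[
\int_0^1 (g - m_g) = \int_0^1 g \,dx - m_g = -m_g = |m_g|,
\]
since $\int_0^1 g = 0$. So the left-hand side is exactly $|m_g|$, and the task is to show $|m_g| \geq \tfrac12 |g|_1$.

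For the right-hand side, decompose the $L^1$ norm into contributions from where $g$ is positive and where it is negative: let $g^+ = \max(g,0)$ and $g^- = \max(-g,0)$, so $|g|_1 = \int_0^1 g^+ + \int_0^1 g^-$. The zero-mean hypothesis gives $\int_0^1 g^+ = \int_0^1 g^-$, hence $|g|_1 = 2\int_0^1 g^-$. Therefore the desired inequality $|m_g| \geq \tfrac12 |g|_1$ is equivalent to
\[
|m_g| \geq \int_0^1 g^-\,dx.
\]
This last inequality is immediate: on the set where $g < 0$ we have $g^- = -g \leq -m_g = |m_g|$ pointwise, and on the complement $g^- = 0 \leq |m_g|$; integrating over $[0,1]$ (a set of measure $1$) gives $\int_0^1 g^- \leq |m_g|$.

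Assembling the pieces: $\int_0^1 (g - \min_g) = |m_g| \geq \int_0^1 g^- = \tfrac12 |g|_1$, which is the claim. I do not expect any real obstacle here — the argument is elementary. The one point that deserves a sentence of care is the justification that $\min_g \leq 0$ and hence $g - \min_g \geq 0$, so that the left side is genuinely $|m_g|$ rather than $m_g$ with an ambiguous sign; this follows from continuity of $g$ on the compact interval $[0,1]$ together with $\int_0^1 g = 0$. Equality holds, for instance, when $g$ takes only two values (a "bang-bang" function), which indicates the bound is sharp and explains the appearance of the factor $\tfrac12$.
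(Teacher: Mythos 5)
Your argument is correct, and it takes a genuinely different (though equally elementary) route from the paper's. The paper never evaluates the left-hand side: it restricts the integral to the set $S^+$ where $g>0$, uses $\min g\le 0$ to discard the $-\min g$ term there, and invokes the zero-mean identity $\int_{S^+}g=\tfrac12\left|g\right|_1$. You instead observe that the left-hand side equals $-\min g$ exactly (because the mean is zero and the interval has measure one), reduce the lemma to $-\min g\ge\int_0^1 g^-$, and prove that by the pointwise bound $g^-\le-\min g$; the same zero-mean fact, now in the form $\int_0^1 g^-=\tfrac12\left|g\right|_1$, finishes the argument. So both proofs hinge on the equality of the integrals of the positive and negative parts; the difference is that the paper bounds the left side from below by throwing away mass, while you compute it exactly and bound the negative part in $L^1$ by its supremum. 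Your version has the small advantage of exposing what the lemma really asserts, namely $\left|g\right|_1\le -2\min g$ for a zero-mean function on a probability interval, which makes the role of the constant $\tfrac12$ transparent; note only that your two-valued equality example is not itself continuous (a continuous two-valued function is constant, hence zero), so it shows sharpness of the constant via continuous approximations rather than literal equality within the hypotheses of the lemma. As in the paper, continuity is used only to ensure the minimum is attained and is $\le 0$.
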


\begin{proof}
Let~$S^+\subset [0,1]$ be the set where the function~$g$ is positive,
so that~$\left|g\right|_1 = \int|g| = 2 \int_{S^+} g$.  Since
$\min{\!}_g \leq 0$, we obtain
\[
\int_0^1 (g - \min{\!}_g) \geq \int_{S^+} (g -\min{\!}_g) \geq
\int_{S^+} g = \frac{1}{2} \left| g \right|_1,
\]
completing the proof of the lemma.
\end{proof}

Consider the unit square torus~$(\R^2/\Z^2, ds^2)$,
where~$ds^2=dx^2+dy^2$, covered by the~$(x,y)$ plane.

\begin{theorem}
\label{92}
If the conformal factor~$f$ of the metric~$\gmetric = f^2 ds^2$
on~$\R^2/\Z^2$ only depends on one of the two variables,
then~$\gmetric$ satisfies the inequality
\begin{equation}
\label{61}
\area(\gmetric) - \var(f) \geq \left( \sys(\gmetric) + \tfrac{1}{2}
\left|f_0 \right|_1 \right)^2,
\end{equation}
where~$f_0=f-m$ and~$m$ is the expected value of~$f$.
\end{theorem}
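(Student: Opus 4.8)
The plan is to combine the variance identity \eqref{21} with a refined lower bound for the expected value $E_\mu(f)$ that captures not just the systole but an extra $L^1$ contribution, exactly as in Lemma~\ref{91}. Recall that for the unit square torus with $\tau=i$ we have $\sigma=1$, and the flat torus is ruled by unit-length horizontal geodesics $\gamma_y$, $y\in[0,1]$. Say the conformal factor depends only on $y$, so $f=f(y)$. Then $\length(\gamma_y)=\int_0^1 f(y)\,dx = f(y)$, and Fubini gives $E_\mu(f)=\int_0^1 f(y)\,dy = m$. The point is that $\sys(\gmetric)\le \min_y f(y) = \min_f$ (a horizontal geodesic of minimal length is noncontractible), so that
\[
E_\mu(f) = m = \min{\!}_f + \int_0^1 (f - \min{\!}_f)\,dy \geq \sys(\gmetric) + \int_0^1 (f_0 - \min{\!}_{f_0}),
\]
since $f_0 = f-m$ differs from $f$ by a constant, so $f-\min_f = f_0 - \min_{f_0}$. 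Now apply Lemma~\ref{91} to the zero-mean function $g=f_0$ on $[0,1]$ to get $\int_0^1(f_0-\min_{f_0}) \geq \tfrac12|f_0|_1$, whence $E_\mu(f) \geq \sys(\gmetric) + \tfrac12|f_0|_1$.

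The conclusion then follows by squaring: both sides of the last inequality are nonnegative (the systole is positive, the $L^1$ norm nonnegative, and $E_\mu(f)=m>0$), so $\bigl(E_\mu(f)\bigr)^2 \geq \bigl(\sys(\gmetric)+\tfrac12|f_0|_1\bigr)^2$. Substituting into the variance identity \eqref{21}, namely $\area(\gmetric) - \bigl(E_\mu(f)\bigr)^2 = \var(f)$, gives
\[
\area(\gmetric) - \var(f) = \bigl(E_\mu(f)\bigr)^2 \geq \Bigl(\sys(\gmetric) + \tfrac12\bigl|f_0\bigr|_1\Bigr)^2,
\]
which is precisely \eqref{61}.

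One technical point deserves care: I should make sure $\min_y f(y)$ is attained and that the corresponding horizontal circle really is a systolic competitor, i.e.\ noncontractible on $\T^2$. Since $f$ is continuous and $[0,1]$ is compact the minimum is attained, and each $\gamma_y$ represents a generator of $\pi_1(\T^2)=\Z^2$, so $\gamma_{y_0}$ at the minimizing $y_0$ is noncontractible of length $\min_f$, forcing $\sys(\gmetric)\le\min_f$. That is really the only substantive step; the rest is the bookkeeping that $f$ and $f_0$ have the same oscillation and an invocation of the already-proved Lemma~\ref{91}. I expect no genuine obstacle here — the main subtlety, if any, is simply keeping straight which quantities are being translated by the constant $m$ and verifying the nonnegativity needed before squaring. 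The same argument with the roles of $x$ and $y$ interchanged handles the case where $f$ depends only on $x$, so the hypothesis ``depends on one of the two variables'' is covered symmetrically.
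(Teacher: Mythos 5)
Your proposal is correct and follows essentially the same route as the paper: compute $E_\mu(f)=m$ for a factor depending on one variable, bound it below by $\sys(\gmetric)+\tfrac12|f_0|_1$ via the horizontal circle at the minimizing height together with Lemma~\ref{91}, and conclude by squaring and substituting into the variance identity \eqref{21}. The only (harmless) difference is that the paper asserts the equality $\sys(\gmetric)=\min{\!}_f$, whereas you use and justify only the inequality $\sys(\gmetric)\le\min{\!}_f$, which is all the argument needs.
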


To make inequality \eqref{61} resemble Loewner's torus inequality, we
can rewrite it as follows:
\[
\area(\gmetric) - \sys(\gmetric)^2 \geq \var(f) + \sys(\gmetric)
\left|f_0\right|_1 + \frac{1}{4} \left|f_0\right|_1^2,
\]
so that, in particular, we obtain a form of the inequality which does
not involve the systole in the right hand side:
\begin{equation}
\label{93} 
\area(\gmetric) - \sys(\gmetric)^2 \geq \var(f) + \frac{1}{4}
\left|f_0\right|_1^2.
\end{equation}

\begin{proof}[Proof of Theorem~\ref{92}]
To fix ideas, assume~$f$ only depends on~$y$.  Let~$y_0$ be the point
where the minimum~$\min{\!}_f$ of~$f=f(y)$ is attained.
The~$\gmetric$-length of the horizontal unit interval at height~$y_0$
equals
\begin{equation}
\label{62}
\int_0^1 f(x,y_0)dx= \int_0^1 \min{\!}_f dx =\min{\!}_f.
\end{equation}
Such an interval parametrizes a noncontractible loop on the torus, and
we obtain
\[
\sys(\gmetric) = \min{\!}_f.
\]
Applying Lemma~\ref{91} to~$f_0=f-E(f)$ where~$f$ is the conformal
factor, we obtain
\begin{equation}
\label{64e}
E(f) -\sys(\gmetric) = \int_0^1 (f - \min{\!}_f) = \int_0^1 (f_0 -
\min{\!}_{f_0}) \geq \frac{1}{2} \left| f_0 \right|_1,
\end{equation}
and the theorem follows from \eqref{21}.
\end{proof}

\section{Biaxial projection and second defect}
\label{ten}

Now consider an arbitrary conformal factor~$f>0$ on~$\R^2/\Z^2$.  We
decompose~$f$ into a sum
\[
f(x,y)= E(f) + g_f(x) + h_f(y) + k_f(x,y),
\]
where functions~$g_f$ and~$h_f$ have zero means, and~$k_f$ has zero
mean along every vertical and horizontal unit interval.  The
``biaxial'' projection~$\PPP(f)$ is defined by setting
\begin{equation}
\label{101}
\PPP(f)= g_f(x)+h_f(y).
\end{equation}
In terms of the double Fourier series of~$f$, the projection~$\PPP$
amounts to extracting the~$(m,n)$-terms such that~$mn=0$ (\ie the pair
of axes), but~$(m,n)\not=(0,0)$.

\begin{theorem}
\label{11cc}
In the conformal class of the unit square torus, the metric~$f^2 ds^2$
defined by a conformal factor~$f(x,y)>0$, satisfies the following
version of Loewner's torus inequality with a second defect term:
\begin{equation}
\label{15bac}
\area(\gmetric) - \sys(\gmetric)^2 \geq \var(f) + \frac{1}{16} \left|
\PPP(f) \right|_1^2.
\end{equation}
If~$f$ only depends on one variable then the
coefficient~$\frac{1}{16}$ in \eqref{15bac} can be replaced
by~$\frac{1}{4}$.
\end{theorem}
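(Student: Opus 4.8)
The plan is to derive \eqref{15bac} from the variance identity \eqref{21}, which on the unit square torus $\R^2/\Z^2$ (with $\gmetric_0=ds^2$, so that $\mu=dx\,dy$) takes the form
\[
\area(\gmetric)-\var(f)=\big(E(f)\big)^2,\qquad E(f)=\int_{\R^2/\Z^2} f .
\]
In view of this, it suffices to prove the linear estimate
\[
E(f)\;\geq\;\sys(\gmetric)+\tfrac14\left|\PPP(f)\right|_1
\]
in general, with the coefficient $\tfrac14$ improved to $\tfrac12$ when $f$ depends on a single variable. Indeed, both sides of this estimate are nonnegative, so squaring is legitimate and, after discarding the nonnegative cross term, gives $\big(E(f)\big)^2\geq\sys(\gmetric)^2+\tfrac1{16}\left|\PPP(f)\right|_1^2$, respectively $\big(E(f)\big)^2\geq\sys(\gmetric)^2+\tfrac14\left|\PPP(f)\right|_1^2$, which is exactly the assertion.

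To prove the linear estimate I would use the coordinate loops of the flat torus. Writing $f(x,y)=E(f)+g_f(x)+h_f(y)+k_f(x,y)$ as in \eqref{101}, the $\gmetric$-length of the horizontal unit loop $\gamma_y$ at height $y$ is
\[
\length(\gamma_y)=\int_0^1 f(x,y)\,dx=E(f)+h_f(y),
\]
because $g_f$ has zero mean and $k_f$ has zero mean along every horizontal unit interval; symmetrically the vertical unit loop at abscissa $x$ has length $E(f)+g_f(x)$. Each such loop is noncontractible, so minimizing over $y$, respectively over $x$, yields
\[
\sys(\gmetric)\;\leq\;E(f)+\min{\!}_{h_f},\qquad\sys(\gmetric)\;\leq\;E(f)+\min{\!}_{g_f}.
\]
Since $g_f$ and $h_f$ are continuous with zero mean on $[0,1]$, Lemma~\ref{91} applies to each and gives $-\min{\!}_{h_f}\geq\tfrac12\left|h_f\right|_1$ and $-\min{\!}_{g_f}\geq\tfrac12\left|g_f\right|_1$, whence
\[
E(f)-\sys(\gmetric)\;\geq\;\tfrac12\max\big(\left|g_f\right|_1,\ \left|h_f\right|_1\big).
\]

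The last step is to pass from the two axial contributions to $\left|\PPP(f)\right|_1$. Using $\max(a,b)\geq\tfrac12(a+b)$ together with the triangle inequality
\[
\left|\PPP(f)\right|_1=\left|g_f+h_f\right|_1\;\leq\;\left|g_f\right|_1+\left|h_f\right|_1
\]
(the three $L^1$ norms being taken consistently on the torus, where $g_f$ and $h_f$ live on complementary circle factors), I obtain $E(f)-\sys(\gmetric)\geq\tfrac14\left|\PPP(f)\right|_1$, as desired. If $f$ depends on only one variable, say $f=f(y)$, then $g_f=k_f=0$ and $\PPP(f)=h_f$, so the $\max$ above is simply $\left|h_f\right|_1=\left|\PPP(f)\right|_1$ and the coefficient is $\tfrac12$; alternatively, this case is already covered by Theorem~\ref{92} and inequality \eqref{93}. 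I expect the only point genuinely requiring care to be this balancing of the horizontal against the vertical defect: the inequality $\max(a,b)\geq\tfrac12(a+b)$ is the sole source of slack, and is responsible for the gap between the constant $\tfrac1{16}$ in the general case and $\tfrac14$ in the one-variable case. Everything else is the identity \eqref{21} and a direct application of Lemma~\ref{91}.
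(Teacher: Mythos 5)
Your proposal is correct and follows essentially the same route as the paper: your function $y\mapsto\length(\gamma_y)=E(f)+h_f(y)$ is exactly the paper's averaged conformal factor $\barf$, your bound $\sys(\gmetric)\leq E(f)+\min{\!}_{h_f}$ is the paper's systole comparison \eqref{74}, and Lemma~\ref{91} plus the triangle inequality for $\left|\PPP(f)\right|_1$ are used identically, with your $\max\bigl(\left|g_f\right|_1,\left|h_f\right|_1\bigr)\geq\tfrac12\left|\PPP(f)\right|_1$ playing the role of the paper's without-loss-of-generality choice \eqref{102}. The only cosmetic difference is that you avoid introducing the auxiliary metric $\barf^2 ds^2$ explicitly.
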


\begin{proof}
Applying the triangle inequality to \eqref{101}, we obtain
\[
\left| \PPP(f) \right|_1 \leq \left| g_f(x) \right|_1 + \left| h_f(y)
\right|_1.
\]
Due to the symmetry of the two coordinates, we can assume without loss
of generality that
\begin{equation}
\label{102}
\left| h_f(y) \right|_1 \geq \frac{1}{2} \left| \PPP(f) \right|_1.
\end{equation}
We define a function~$\barf$ by setting
\[
\barf(y)=E(f) + h_f(y)= \int_0^1 f(x,y)dx.
\]
We have~$\barf >0$ since it is an average of a positive function.
Clearly, we have~$\barf_0 = h_f$.  By Lemma~\ref{91} applied
to~$\barf_0$, we obtain
\[
\int \left( \barf - \min{}_{\barf} \right) \geq \frac{1}{2} \left|
\barf_0 \right|_1 \geq \frac{1}{4} \PPP(f)
\]
in view of~\eqref{102}.  We now compare the two metrics~$\barf^2 ds^2$
and~$f^2ds^2$.  Let~$y_0$ be the point where the function~$\barf$
attains its minimum.  Then
\begin{equation}
\label{74}
\sys(\barf^2 ds^2) = \min{}_{\barf} = \barf(y_0) = \int_0^1 f(x,y_0)dx
\geq \sys(f^2ds^2).
\end{equation}
Meanwhile,
\begin{equation}
\label{75}
E(f)=E(\barf) \geq \sys( \barf^2 ds^2) + \frac{1}{2} \left| \barf_0
\right|_1
\end{equation}
by \eqref{64e} applied to the averaged metric~$\barf^2ds^2$.  Thus,
\[
\begin{aligned}
\area(f^2ds^2) -\var(f) &= E(f)^2 \cr& \geq \left( \sys(\barf^2ds^2) +
\frac{1}{4}\PPP(f) \right)^2 \cr&\geq \left( \sys(f^2ds^2) +
\frac{1}{4}\PPP(f) \right)^2
\end{aligned}
\]
by combining \eqref{74} and \eqref{75}.  
\end{proof}

\section{Acknowledgments}

We are grateful to M. Agranovsky, A. Rasin, and A. Reznikov for
helpful discussions.

\vfill\eject

\end{document}